\def\BibTeX{{\rm B\kern-.05em{\sc i\kern-.025em b}\kern-.08em
		T\kern-.1667em\lower.7ex\hbox{E}\kern-.125emX}}
\newcommand{\real}{\ensuremath{\mathbb{R}}}
\newcommand{\until}[1]{\{1,\dots, #1\}}
\newcommand{\subscr}[2]{#1_{\textup{#2}}}
\renewcommand{\epsilon}{\varepsilon}
\newcommand{\argmin}{\ensuremath{\operatorname{argmin}}}
\newcommand\RedeclareMathOperator{%
	\@ifstar{\def\rmo@s{m}\rmo@redeclare}{\def\rmo@s{o}\rmo@redeclare}%
}
\newcommand\rmo@redeclare[2]{%
	\begingroup \escapechar\m@ne\xdef\@gtempa{{\string#1}}\endgroup
	\expandafter\@ifundefined\@gtempa
	{\@latex@error{\noexpand#1undefined}\@ehc}%
	\relax
	\expandafter\rmo@declmathop\rmo@s{#1}{#2}}
\newcommand\rmo@declmathop[3]{%
	\DeclareRobustCommand{#2}{\qopname\newmcodes@#1{#3}}%
}
\newcommand{\longthmtitle}[1]{\mbox{}{\bf \textit{(#1).}}}
\theoremstyle{plain}
\newtheorem{thm}{Theorem}
\newtheorem{prop}{Proposition}
\theoremstyle{definition}
\newtheorem{assump}{Assumption}
\newtheorem{cond}{Condition}
\newtheorem{example}{Example}
\theoremstyle{remark}
\newcommand{\Pref}{\ensuremath{\subscr{P}{ref}}}
\DeclareMathOperator{\ones}{\mathbf{1}}
\DeclareMathOperator{\zeros}{\mathbf{0}}
\RedeclareMathOperator{\P}{\mathcal{P}}
\DeclareMathOperator{\E}{\mathcal{E}}
\DeclareMathOperator{\U}{\mathcal{U}}
\DeclareMathOperator{\Ex}{\mathbb{E}}
\DeclareMathOperator{\Px}{\mathbb{P}}
\DeclareMathOperator{\D}{\mathcal{D}}
\DeclareMathOperator{\N}{\mathcal{N}}
\DeclareMathOperator{\G}{\mathcal{G}}
\RedeclareMathOperator{\L}{\mathcal{L}^S}
\DeclareMathOperator{\x}{\mathbf{x}}
\newcommand{\mean}{\operatorname{mean}}
\newcommand{\torhighlight}[1]{\textcolor{black}{#1}}
\begin{document}
	
	\title{
		Distributed Stochastic Nested Optimization via Cubic Regularization
	}
	\author{Tor Anderson \qquad Sonia Mart{\'\i}nez \tnoteref{t1}}
	
	\tnotetext[t1]{Tor Anderson and Sonia Mart{\'\i}nez are with the Department of Mechanical and Aerospace Engineering, University of California, San Diego, CA, USA. Email: {\small {\tt \{tka001, soniamd\}@eng.ucsd.edu}}. This research was supported by the Advanced Research Projects Agency - Energy under the NODES program, Cooperative Agreement DE-AR0000695.} 
	
	\begin{abstract}
		This paper considers a nested stochastic distributed optimization
		problem. In it, approximate solutions to realizations of the
		inner-problem are leveraged to obtain a Distributed Stochastic Cubic
		Regularized Newton (DiSCRN) update to the decision variable of the
		outer problem. We provide an example involving electric vehicle
		users with various preferences which demonstrates that this model is
		appropriate and sufficiently complex for a variety of data-driven
		multi-agent settings, in contrast to non-nested models. The main two
		contributions of the paper are: (i) development of local stopping
		criterion for solving the inner optimization problem which
		guarantees sufficient accuracy for the outer-problem update, and
		(ii) development of the novel DiSCRN algorithm for solving the
		outer-problem and a theoretical justification of its
		efficacy. Simulations demonstrate that this approach is more stable
		and converges faster than standard gradient and Newton outer-problem
		updates in a highly nonconvex scenario, and we also demonstrate that the method extends to an EV charging scenario in which resistive battery losses and a time-of-use pricing model are considered over a time horizon.
		
	\end{abstract}
\maketitle
\section{Introduction}

\textit{Motivation.} 
As applications emerge which are high dimensional and described by
large data sets, the need for powerful optimization tools has never
been greater. In particular, agents in distributed settings are
commonly given a global optimization task where they must sparingly
exchange local information with a small set of neighboring agents for
the sake of privacy and robust scalability. This architecture can,
however, slow down convergence compared to centralized ones, which is
concerning if obtaining the iterative update information is
costly. Gradient-based methods are commonly used due to their
simplicity, but they tend to be vulnerable to slow convergence around
saddle points. Newton-based methods use second-derivative information
to improve convergence, but they are still liable to be slow in areas
where higher order terms dominate the objective function and even
unstable when the Hessian is ill conditioned. A powerful tool for
combating these Newton-based vulnerabilities is imposing a cubic
regularization on the function's second-order Taylor approximation,
but the current work on this technique does not unify
\emph{distributed}, \emph{stochastic}, and \emph{nonconvex}
elements. Motivated by this, we study the adaptation of the Stochastic
Cubic Regularized Newton approach to solve a distributed nested
optimization problem.

\textit{Literature Review.}
One of the most 
widely used 
stochastic optimization method is  
stochastic gradient-based (first-order) methods,
see~\cite{WAG:84,LB:10,LB-FEC-JN:18} as broad references. These
methods are powerful because they necessitate only a small sampling of
the data set 
to compute an update direction at each iterate. However, these
first-order algorithms suffer from slow convergence around
saddle-points~\cite{SD-CJ-JL-MJ-BP-AS:17}, which are
disproportionately more present in higher-dimensional nonconvex
problems~\cite{YD-RP-CG-KC-SG-YB:14}. By contrast, higher-order
Newton-based methods tend to perform more strongly across applications
in terms of number of calls to an oracle or total iterations,
see~\cite{FY-AN-US:16,XW-SM-DG-WL:17} for examples in stochastic
non-strongly convex and nonconvex settings, respectively,
and~\cite{AM-QL-AR:17,TA-CYC-SM:18-auto,RT-HBA-AJ:19} for various multi-agent
examples.

An issue with many of the aforementioned algorithms is they are
vulnerable to slow convergence or instability in the presence of
saddle-points and/or an ill-conditioned Hessian matrix. 
A
growing body of works thus focuses on using a cubic-regularization
term in the second-order Taylor approximation of the objective
function. 
Nesterov and Polyak laid significant groundwork for this method
in~\cite{YN-BTP:06}, and substantial follow-ups are contained
in~\cite{CC-NG-PT:09P1,CC-NG-PT:10P2}, which study adaptive batch
sizes and the effect of inexactness in the cubic submodel on
convergence. Excitement about this topic has grown substantially in
the last few years, with~\cite{YC-JD:19} showing how the global
optimizer of the nonconvex cubic submodel can be obtained under
certain initializations of gradient descent,
and~\cite{NT-MS-CJ-JR-MJ:18} being one of the first thorough analyses
of the algorithm in the traditional stochastic optimization
setting. In~\cite{XC-BJ-TL-SZ:18}, the authors consider the stochastic
setting from an adaptive batch-size perspective and~\cite{CU-AJ:20}
is, to our knowledge, the only existing work in a distributed
application, with an alternative approach that allows for a
communication complexity analysis. 
Both~\cite{XC-BJ-TL-SZ:18} and~\cite{CU-AJ:20} assume convexity,
and~\cite{CU-AJ:20} is nonstochastic. As far as we know, no current
work has unified \emph{distributed}, \emph{stochastic}, and
\emph{nonconvex} elements, particularly in a nested optimization
scenario.

\textit{Statement of Contributions.} We begin the paper by formulating
a nested distributed stochastic optimization problem, where
approximate solutions to realizations of the inner-problem are needed
to obtain iterative updates to the outer problem, and we motivate this
model with an example based on electric vehicle charging
preferences. The contributions of this paper are then twofold. First,
we develop a stopping criterion for a Laplacian-gradient subsolver of
the inner-problem. The stopping criterion can be validated locally by
each agent in the network, and the relationship to solution accuracy
aids the synthesis with the outer-problem update. Second, to that end,
we formulate a distributed optimization model of the stochastic
outer problem and develop a cubic regularization of its second-order
approximation. This formulation lends itself to obtaining a
Distributed Stochastic Cubic-Regularized Newton (DiSCRN) algorithm, and
we provide theoretical justification of its convergence.

\section{Preliminaries}

This section establishes notation\footnote{The set of real numbers,
	real $n$-dimensional vectors, and real $n$-by-$m$ matrices are
	written as $\real, \real^n$, and $\real^{n\times m}$,
	respectively. The transpose of a matrix $A$ is denoted by $A^\top$,
	the $n\times n$ identity matrix is written as $I_n$, and we write
	$\ones_n = (1,\dots,1)^\top\in\real^n$ and $\zeros_n =
	(0,\dots,0)^\top\in\real^n$. Orthogonality of two vectors $x,y\in
	\real^n$ is denoted $x\perp y \leftrightarrow x^\top y = 0$. The
	standard Euclidean norm and the Kronecker product are indicated by
	$\| \cdot \|$, $\otimes$,
	respectively. 
	For a function $f:\real^n\rightarrow \real$, the gradient and
	Hessian of $f$ with respect to $x\in\real^n$ at $x$ are written as
	$\nabla f(x), \nabla^2 f(x)$, respectively. When $f:\real^n\times
	\real^m\rightarrow\real$ takes multiple arguments, we specify the
	differentiation variable(s) as a subscript of $\nabla$. We use $\Ex$ and $\Px$ 
	to denote expectation and probability, $\delta_a$ to denote the Dirac delta function
	centered at $a\in\real$, and $\U[a,b]$ to denote the uniform
	distribution on $[a,b]$.}  and background concepts to be used
throughout the paper.


First, we provide a brief background on the Cubic-Regularized Newton
method. See~\cite{YN-BTP:06} and~\cite{CC-NG-PT:09P1,CC-NG-PT:10P2}
for more information.  Consider the problem of minimizing a (possibly
nonconvex) function $f:\real^d\rightarrow \real$:
\begin{equation}\label{eq:CRN-prob}
\underset{x\in\real^d}{\text{min}} \ f(x).
\end{equation}
One useful iterative model for minimizing $f(x^k)$ when the function
is strictly convex at the current iterate $x^k$ (or, more accurately, if it is strictly convex on some neighborhood of $x^k$) is descent on a second-order Taylor expansion around
$x^k$:
	\begin{equation}\label{eq:so-model}
	\begin{aligned}
	& \quad x^{k+1} =  \underset{x}{\argmin} \ \bigg{\{} f(x^k) + (x-x^k)^\top
	\nabla f(x^k) 
	\\ &+ \frac{1}{2}(x-x^k)^\top \nabla^2 f(x^k) (x-x^k)\bigg{\}} = x^k - \nabla^{-2}f(x^k)\nabla f(x^k).
	\end{aligned}
	\end{equation}
This closed form expression for $x^{k+1}$ breaks down when $f$ is
nonconvex due to some eigenvalues of $\nabla^2 f(x^k)$ having negative
sign. Further, when $\nabla^2 f(x^2)$ is nearly-singular, the update
becomes very large in magnitude and can lead to instability.
For this reason, consider amending the second-order model with a
cubic-regularization term, to
obtain 
the cubic-regularized, third-order model of $f$ at $x^k$ as:
\begin{equation}\label{eq:to-model}
\begin{aligned}
	m_k(x) &\triangleq \bigg{\{}f(x^k) + (x-x^k)^\top \nabla f(x^k) \\ 
	&+ \frac{1}{2}(x-x^k)^\top \nabla^2 f(x^k) (x-x^k) + \frac{\rho}{6}\| x-x^k \|^3\bigg{\}}.
	\end{aligned}
\end{equation}
Here, $\rho$ is commonly taken to be the Lipschitz constant of
$\nabla^2_{xx}f$, which we will formalize in
Section~\ref{sec:prob-form}. 
The update is naturally given by a minimizer to this model:
$x^{k+1} \in \underset{x}{\argmin} \ m_k(x).$
Unfortunately, this model does not beget a closed-form minimizer as
in~\eqref{eq:so-model}, nor is it convex if $f$ is not convex. The
model does, however, become convex for $x$ very far from $x^k$, which
can be seen by computing the Hessian of $m_k$ as $\nabla^2 m_k(x) =
\nabla^2 f(x^k) + \rho \| x-x^k\| I_n$. Additionally, $m_k$ is an
\emph{over-estimator} for $f$, i.e. $m_k (x) \geq f(x), \forall x$. This is seen by considering the cubic
term and recalling Lipschitz properties of $\nabla^2 f$; we describe
this observation in more detail later in the paper.  Therefore, $m_k$
possesses some advantages over other simpler submodels as it possesses
properties of a more standard Newton-based, second-order model while
being sufficiently conservative.

Finally,~\cite{YC-JD:19} recently showed that simply initializing
$x=x^k-r\nabla f(x^k)/\|\nabla f(x^k)\|$ for $r\geq 0$ is sufficient to
show that gradient descent on $m_k$ converges to the global minimizer
of~\eqref{eq:to-model} (under light conditions on $r$ and the gradient
step size).

We refer the reader to~\cite{FB-JC-SM:09} for supplementary notions on Graph
Theory and more background on the Laplacian matrix.

\section{Problem Formulation}\label{sec:prob-form}

This section details the two problem formulations which are of
interest, where the first problem $\P 1$ takes the form of a
stochastic approximation whose cost is a parameterization of the cost
of the second problem $\P 2$. Problem $\P 2$ is a separable
resource allocation problem in which $n$ agents $i\in \N$ must
collectively obtain a solution that satisfies a linear equality
constraint while minimizing the sum of their local costs. (This
problem commonly appears in real-time optimal dispatch for electric
grids with flexible loads and distributed generators, see
e.g.~\cite{CAISO-BPM:18}.) Thus, $\P 1$ can be treated as a nested
optimization, with an objective $F$ that takes stochastic arguments,
and is not necessarily available in closed form if $\P 2$ cannot be
solved directly and/or the distribution $\D$ being unknown. These
problems are stated as
\begin{equation*} 
\P 1: \
\underset{x\in\real^d}{\text{min}} \
F(x) = \Ex_{\chi\sim\D} \left[F_\chi (x)\right].
\end{equation*}
\vspace{-.25cm}
\begin{equation*} 
\begin{aligned}
\P 2: \
\underset{p\in\real^n}{\text{min}} \
f(x,p) &= \sum_{i=1}^n f_i(x,p_i), \\
\text{subject to} \ \sum_{i=1}^n p_i &= \Pref + \hat{\chi} = \Pref + \sum_{i=1}^n \hat{\chi}_i.
\end{aligned}
\end{equation*}
In $\P1$, each $f_i:\real^d \times \real \rightarrow \real$, and $F_\chi(x) \equiv f(x,p^\star)$, where $p^\star$ is the
solution to $\P 2$ for particular realizations $\hat{\chi}_i,$ where $\chi_i\sim\D_i$, i.e. $\chi\sim\D = \D_1 \times \dots \times \D_n$. The elements
$p_i\in\real$ of $p\in\real^n$ and terms $\hat{\chi}_i$ are each associated
with and locally known by agents $i\in\N$, and $\Pref\in\real$ is a
given constant known by a subset of agents (we discuss its
interpretation shortly with an example). First, for $F_\chi$ to be
well defined, it helps if solutions $p^\star$ to $\P 2$ are unique for
fixed $x$ and $\hat{\chi}$, which we now justify with convexity
assumptions for
$f_i$. 
\begin{assump}\longthmtitle{Function Properties: Inner-Problem Argument}\label{assump:fun-inner}
	The local cost functions $f_i$ are twice differentiable and
	$\omega_i$-strongly convex in $p_i$ for any fixed $x$. Further, the
	second derivatives are lower and upper bounded:
	\begin{equation*} 0 < \omega_i\leq \nabla^2_{p_i}f_i(x,p_i) \leq
		\theta_i, \qquad \forall x\in\real^d, p_i\in\real, \text{ and } i\in\N.
	\end{equation*}
	This implies $\forall x\in\real^d, p_i, \hat{p}_i\in\real
	\text{ and } i\in\N$:
	\begin{equation*}\omega_i \|p_i - \hat{p}_i \| \leq \|
	\nabla_{p_i} f_i(x,p_i) - \nabla_{p_i} f_i(x,\hat{p}_i) \|
	\leq \theta_i \| p_i -
	\hat{p}_i\|. 
	\end{equation*}
	We also use the shorthands $\omega \triangleq \min_i{\omega_i}$ and $\theta \triangleq \max_i{\theta_i}$.
\end{assump}
This assumption will be required of our analysis in
Section~\ref{ssec:inner-loop}.
We now state some additional assumptions.
\begin{assump}
	\longthmtitle{Function Properties: Lipschitz Outer-Problem
		Argument}\label{assump:fun-outer-lips} The functions $f_i$ have
	$l_i$-Lipschitz gradients and $\rho_i$-Lipschitz Hessians:
	\begin{equation*}{\small
	\begin{aligned} &\| \nabla f_i(x,p_i) - \nabla f_i(y,p_i) \| \leq l_i \| x - y\|, &&\forall x, y\in\real^d, \forall p_i\in\real, \\ &\| \nabla^2 f_i(x,p_i) - \nabla^2 f_i(y,p_i) \| \leq \rho_i \| x - y\|, &&\forall x, y\in\real^d, \forall p_i\in\real.
	\end{aligned}}
	\end{equation*}
	We also use the shorthands $l\triangleq \max_i l_i$ and $\rho\triangleq \max_i \rho_i$.
\end{assump}
\begin{assump}\longthmtitle{Function Properties: 
		Bounded Variance Outer-Problem Argument}\label{assump:fun-outer-var}
	The function $F_\chi $ possesses the following bounded variance
	properties: 
	\begin{equation*}
		\begin{aligned} &\Ex\left[\|\nabla F_\chi (x) - \nabla F (x)\|^2 \right] \leq \sigma^2_1, \\ 
		&\Ex\left[\|\nabla^2 F_\chi (x) - \nabla^2 F (x)\|^2 \right] \leq \sigma^2_2, \\
		&\|\nabla F_\chi (x) - \nabla F (x)\|^2 \leq M_1 \text{ almost surely}, \\
		&\|\nabla^2 F_\chi (x) - \nabla^2 F (x)\|^2 \leq M_2 \text{ almost surely}.
		\end{aligned}
	\end{equation*}
\end{assump}
\begin{assump}\longthmtitle{Function 
		Properties: Lipschitz Interconnection of Variables}\label{assump:inter-lipsch}
	The gradient and Hessian of the function $f$ with respect to $x$ are
	Lipschitz in $p$; that is, there exists constants $\psi_g, \psi_H > 0$
	such that
	\begin{equation*}
	{\small 
		\begin{aligned}
		&\| \nabla_{x} f(x,p) - \nabla_{x} f(x,\hat{p}) \| \leq \psi_g \| p - \hat{p}\|, \\
		&\| \nabla^2_{xx} f(x,p) - \nabla^2_{xx} f(x,\hat{p}) \| \leq \psi_H \| p - \hat{p}\|, \\ 
		&\qquad \forall x\in\real^d, p, \hat{p}\in\real^n.
		\end{aligned}
	}
	\end{equation*}	
\end{assump}
Assumption~\ref{assump:fun-inner} is relatively common in the convex
optimization literature, and it lends itself to obtaining approximate
solutions to $\P 2$ very quickly with stopping criterion
guarantees. Assumption~\ref{assump:fun-outer-lips} is unanimously
leveraged in literature on Cubic-Regularized Newton methods, as the
constant $\rho$ pertains directly to the cubic submodel, while
Assumption~\ref{assump:fun-outer-var} is a common assumption in the
stochastic optimization literature~\cite{NT-MS-CJ-JR-MJ:18}. We note
that, although Assumptions~\ref{assump:fun-outer-lips}
and~\ref{assump:fun-outer-var} do not give a direct relationship with
the local functions $f_i(x,p_i)$, they do imply an implicit
relationship between $x, p, \text{ and } \D$ in the sense that
solutions $p^\star$ to $\P 2$ (and therefore the distributions $\D_i$)
must be ``well-behaved" in some sense. 
This relationship, along with a broader interpretation of the model
$\P 1$ and $\P 2$, is illustrated more concretely in the following
real-world power distribution example.


\begin{example}\longthmtitle{EV Drivers with PV Generators}\label{example:model}
	Consider two EV drivers who each have an EV charging station and a
	PV generator. 
	The goal of this small grid system is to consume net zero power from
	the perspective of the tie line to the bulk grid, thus $\Pref =
	0$. The distributions $\D_1,\D_2$ represent the power output
	distributions of the PVs, and we consider two scenarios for these in
	this example: (1) a ``sunny day" scenario, where the realizations
	$\chi_1,\chi_2\sim\D_1,\D_2$ of PVs 1 and 2 are deterministic, and
	(2) a ``cloudy day" scenario, where intermittent cloud cover induces
	some uncertainty in the moment-to-moment PV generation.
	
	Let $A\in\{\text{sunny},\text{cloudy}\}$ indicate the weather forecast. The model is then fully described as
	\begin{equation*}
	\D_i = \begin{cases}
	\delta_{1.5}, & A = \text{sunny}, \\
	\U[0,1.5], & A = \text{cloudy}
	\end{cases} \quad \text{for both i=1,2},
	\end{equation*}
	\begin{equation*}
	f_1(x,p_1) = (2x + p_1 - 1)^2, \;
	f_2(x,p_2) = (x + p_2 - 2)^2.
	\end{equation*}
	
	For $x=0$, these quadratic functions\footnote{See~\cite{SB-HF:13} for an example where quadratic costs to EV users are induced by resistive energy losses in the battery model and~\cite{AW-BW-GS:12} for a broad reference on modeling generator dispatch.} have local minima at
	$p^\star = (p_1^\star,p_2^\star ) = (1,2)$, which is
	interpreted as drivers 1 and 2 preferring to charge at rates
	of 1 unit and 2 units, respectively, if there are no external
	incentives. On a sunny day, both PVs 
	deterministically produce $\hat{\chi}_1,\hat{\chi}_2 = 1.5$, which
	effectively balances the unconstrained $p^\star$ and both
	drivers can charge at their preference to maintain $\sum_i p_i
	= \Pref + \sum_i \hat{\chi}_i$.
	
	However, on cloudy days the generation of the PVs is no longer deterministic. Thus, the variable $x$
	comes in to play, which can represent a government credit that
	the drivers value differently. The role of $x$ is to shift the cost
	functions such that the unconstrained minima are near lower
	charging values in consideration of the lower expected
	generation from PVs 1 and 2. The optimal $x^\star$ to $\P 1$
	is the value which gives the lowest expected cost of an
	instance of $\P 2$ given $\hat{\chi}_1,\hat{\chi}_2$ realizations from the
	$A = $ cloudy distributions $\D_1, \D_2$. A more complete model of $\P 2$ could include power flow constraints; in this work, we relax these for simplicity.
	
\end{example}

\section{Distributed Formulation and Algorithm}

In this section, we develop the inner-loop
algorithm used to solve $\P 2$. We then synthesize inexact solutions to $\P 2$ with the DiSCRN algorithm for $\P 1$.

\subsection{Inner Loop Gradient Solver}\label{ssec:inner-loop}

For this section, consider $x$ to be fixed and known by all
agents. Further, let $\hat{\chi}_i$ be fixed (presumably from a realization
of $\D_i$) and known only to agent $i$. We adopt the following
assumption on the initial condition $p^0$.

\begin{assump}\longthmtitle{Feasibility of Inner-Problem Initial Condition}\label{assump:init-cond-p2}
	The agents are endowed with an initial condition which is feasible
	with respect to the constraint of $\P 2$; that is, they each possess
	elements $p_i^0$ of a $p^0$ satisfying $\ones_n^\top p^0 = \Pref + \hat{\chi}.$
\end{assump}

The assumption is easily satisfied in practice by communicating
$\Pref$ to one agent $i$ and setting $p_i^0 = \Pref + \hat{\chi}_i$, with
all other agents $j$ using $p_j^0 = \hat{\chi}_j$. 
An alternative to this assumption consists of reformulating
$\P 2$ with distributed constraints and using a dynamic consensus
algorithm as in~\cite{AC-EM-SHL-JC:18-tac}, which would still retain
exponential convergence. We impose
Assumption~\ref{assump:init-cond-p2} for simplicity. Finally, we
assume connectedness of the communication graph:
\begin{assump}\longthmtitle{Graph Properties}\label{assump:graph-conn}
	The communication graph $\G$ is connected and undirected; that is, a
	path exists between any pair of nodes and, equivalently, its
	Laplacian matrix $L = L^\top \succeq 0$ has rank $n-1$ with
	eigenvalues $0 = \lambda_1 < \lambda_2 \leq \dots \leq \lambda_n$.
\end{assump}

The discretized Laplacian-flow dynamics are given by:
\begin{equation}\label{eq:disc-lap-flow}
p^+ = p - \eta L\nabla_p f(x,p).
\end{equation}
Note that these dynamics are distributed, as the sparsity of $L$
implies each agent need only know $\nabla_{p_i} f_i(x,p_i)$ and
$\nabla_{p_j} f(x,p_j)$ for $j\in\N_i$ to compute $p_i^+$. We now
justify convergence of~\eqref{eq:disc-lap-flow} to the solution
$p^\star$ of $\P 2$:
\begin{prop}\longthmtitle{Convergence of Discretized Laplacian Flow}\label{prop:grad-cvg}
  Let $p^\star \in \real^n$ be the unique minimizer of $\P 2$. Given
  Assumption~\ref{assump:init-cond-p2} on the feasibility of the
  initial condition, Assumption~\ref{assump:graph-conn}, on
  connectivity of the communication graph, and
  Assumption~\ref{assump:fun-inner} on the Lipschitz gradient
  condition of the function gradients, then, under the
  dynamics~\eqref{eq:disc-lap-flow} with $0 < \eta < \frac{2}{\theta
    \lambda_n}$, $p$ converges asymptotically to $p^\star$.
\end{prop}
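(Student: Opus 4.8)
The plan is to use the objective $f(x,\cdot)$ itself as a Lyapunov function along the feasible affine set, exploiting the spectral structure of $L$. First I would record two structural facts. Since the graph is undirected and connected, $L=L^\top$ satisfies $L\ones_n=0$, so $\ones_n^\top p^+ = \ones_n^\top p - \eta\,\ones_n^\top L\nabla_p f(x,p)=\ones_n^\top p$; hence the feasibility sum $\ones_n^\top p^k$ is invariant, and by Assumption~\ref{assump:init-cond-p2} every iterate stays on the feasible set, so the error $p^k-p^\star$ lies in $\ones_n^\perp$ for all $k$. Second, the first-order optimality conditions for $\P 2$ read $\nabla_p f(x,p^\star)=\mu^\star\ones_n$ for a multiplier $\mu^\star$, equivalently $L\nabla_p f(x,p^\star)=0$, so $p^\star$ is an equilibrium of~\eqref{eq:disc-lap-flow}. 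Note also that Assumption~\ref{assump:fun-inner} makes $\nabla^2_p f(x,\cdot)$ a diagonal matrix with entries in $[\omega,\theta]$, so $f(x,\cdot)$ is $\omega$-strongly convex with $\theta$-Lipschitz gradient in $p$.

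The core estimate is a descent inequality, and this is where the step-size bound enters. Applying the descent lemma for $\theta$-smooth functions to~\eqref{eq:disc-lap-flow} (writing $\nabla_p f$ for $\nabla_p f(x,p)$) gives
\begin{equation*}
f(x,p^+) \le f(x,p) - \eta\,\nabla_p f^\top L\nabla_p f + \tfrac{\theta\eta^2}{2}\,\nabla_p f^\top L^2\nabla_p f.
\end{equation*}
Using $L^2 \preceq \lambda_n L$ (valid since $L\succeq0$), the right-hand side is bounded by $f(x,p) - \eta\big(1-\tfrac{\eta\theta\lambda_n}{2}\big)\,\nabla_p f^\top L\nabla_p f$, and the choice $0<\eta<\tfrac{2}{\theta\lambda_n}$ makes the coefficient $c\triangleq\eta\big(1-\tfrac{\eta\theta\lambda_n}{2}\big)$ strictly positive. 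Thus $f(x,p^k)$ is nonincreasing; since it is bounded below by $f(x,p^\star)$, telescoping yields $\sum_k \nabla_p f(x,p^k)^\top L\nabla_p f(x,p^k)<\infty$, and in particular $\nabla_p f(x,p^k)^\top L\nabla_p f(x,p^k)\to 0$.

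The final and trickiest step is to upgrade this vanishing quadratic form into convergence of the iterate itself, since $\nabla_p f^\top L\nabla_p f$ only controls the component of the gradient orthogonal to $\ones_n$. Writing $\Pi=I_n-\tfrac1n\ones_n\ones_n^\top$ for the projector onto $\ones_n^\perp$, the spectral gap gives $L\succeq\lambda_2\Pi$, hence $\nabla_p f^\top L\nabla_p f\ge\lambda_2\|\Pi\nabla_p f\|^2$. To relate $\|\Pi\nabla_p f\|$ to the distance $\|p-p^\star\|$, I would use $\Pi\nabla_p f(x,p^\star)=0$ (from optimality) together with a mean-value expansion $\nabla_p f(x,p)-\nabla_p f(x,p^\star)=\bar H(p-p^\star)$, where $\bar H$ is diagonal with $\bar H\succeq\omega I_n$; since $p-p^\star\in\ones_n^\perp$, a Cauchy--Schwarz argument gives $\|\Pi\nabla_p f(x,p)\|\ge\omega\|p-p^\star\|$. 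Chaining these, $\nabla_p f^\top L\nabla_p f\ge\lambda_2\omega^2\|p-p^\star\|^2$, so the vanishing of the quadratic form forces $p^k\to p^\star$. I expect this last translation --- coupling the graph's spectral gap $\lambda_2$ with strong convexity on the constraint subspace --- to be the main obstacle; it is also what would, when combined with the strong-convexity upper bound $f(x,p)-f(x,p^\star)\le\tfrac{\theta}{2}\|p-p^\star\|^2$, upgrade the conclusion to the exponential rate alluded to in the text.
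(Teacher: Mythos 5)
Your proof is correct and takes essentially the same route as the paper's: a quadratic (descent-lemma) expansion giving $f(x,p^+)-f(x,p)\le -\eta\,\nabla_p f^\top L\nabla_p f+\tfrac{\theta\eta^2}{2}\nabla_p f^\top L^2\nabla_p f$, followed by the eigenvalue bound $L^2\preceq\lambda_n L$ to obtain strict descent exactly for $0<\eta<2/(\theta\lambda_n)$. In fact you go slightly further than the paper's sketch, which stops at strict negativity of the descent quantity: your final step --- combining $L\succeq\lambda_2\Pi$ with the strong-convexity bound $\|\Pi\nabla_p f(x,p)\|\ge\omega\|p-p^\star\|$ on the invariant feasible affine set --- explicitly converts the vanishing quadratic form into $p^k\to p^\star$, a passage the paper leaves implicit (and which is the same spectral-gap/strong-convexity coupling underlying its Proposition~\ref{prop:exp-cvg}).
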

\begin{proof}
  Using a standard quadratic expansion around the current iterate $p$
  (see e.g. $\S 9.3$ of~\cite{SB-LV:04}) and Lipschitz bounds yields
  $f(p^+) - f(p) \leq \theta\eta^2 / 2 \| L \nabla_p f(x,p)\|^2 \\ -
  \eta \nabla_p f(x,p)^\top L \nabla_p f(x,p)$. Careful treatment of
  the eigenspace of $L$ and some algebraic manipulation shows that
  $f(p^+) - f(p)$ is strictly negative for $\eta$ as in the
  statement. 
\end{proof}

We now provide an additional result on exponential convergence of the state
error with a further-constrained step size as compared to the
statement in Proposition~\ref{prop:grad-cvg}.
\begin{prop}\longthmtitle{Exponential Convergence with Bounded Error}\label{prop:exp-cvg}
	Let Assumptions~\ref{assump:init-cond-p2},~\ref{assump:graph-conn},
	and~\ref{assump:fun-inner} hold as before. For $0 < \eta <
	2\omega\lambda_2/\theta^2\lambda_n^2$, the quantity $\|p - p^\star
	\|$ converges exponentially to zero under the
	dynamics~\eqref{eq:disc-lap-flow}. For $\eta = \omega\lambda_2 / \theta^2 \lambda_n^2$, the rate is $\| p^+ - p^\star \| \leq
	\sqrt{1-\omega^2 \lambda_2^2 / \lambda_n^2 \theta^2}\|p-p^\star\|$,
	and $\|p^K-p^\star\| \leq \Delta$ for $K \geq \log(\Delta/\|p^0 -
	p^\star\|)/\log(\sqrt{1-\omega^2 \lambda_2^2 / \lambda_n^2
		\theta^2})$. 
\end{prop}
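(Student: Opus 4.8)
The plan is to track the error $e \triangleq p - p^\star$ and show it contracts by a fixed factor at each iteration. First I would record two structural facts. Since each $f_i$ is strongly convex and $\P 2$ has a single affine constraint, the KKT conditions give $\nabla_p f(x,p^\star) = \mu\ones_n$ for some $\mu\in\real$; because $L\ones_n = \zeros_n$, this yields $L\nabla_p f(x,p^\star) = \zeros_n$, so $p^\star$ is a fixed point of~\eqref{eq:disc-lap-flow}. Second, since $\ones_n^\top L = \zeros_n^\top$, the dynamics preserve the constraint, so by Assumption~\ref{assump:init-cond-p2} we have $\ones_n^\top e = 0$ at every iterate. This second fact is what will let me invoke the spectral gap $\lambda_2$ rather than the trivial eigenvalue $\lambda_1 = 0$.

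Next I would linearize the gradient difference exactly. By separability and the fundamental theorem of calculus, $\nabla_p f(x,p) - \nabla_p f(x,p^\star) = H e$, where $H \triangleq \int_0^1 \nabla_p^2 f(x,p^\star + s e)\,ds$ is diagonal with $\omega I_n \preceq H \preceq \theta I_n$ by Assumption~\ref{assump:fun-inner}. Subtracting the fixed-point identity from~\eqref{eq:disc-lap-flow} then gives the clean recursion $e^+ = (I_n - \eta L H)e$, and hence
\begin{equation*}
\|e^+\|^2 = \|e\|^2 - 2\eta\, e^\top L H e + \eta^2 \|L H e\|^2.
\end{equation*}
The remaining work is to bound the two nontrivial terms. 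The quadratic term is immediate: $\|LHe\| \le \|L\|\,\|H\|\,\|e\| \le \lambda_n\theta\|e\|$. The cross term is where the argument must be careful: I would aim to show $e^\top L H e \ge \omega\lambda_2\|e\|^2$, combining $H \succeq \omega I_n$ with the fact that $e\perp\ones_n$ forces $e^\top L e \ge \lambda_2\|e\|^2$. Substituting both bounds yields $\|e^+\|^2 \le q(\eta)\|e\|^2$ with $q(\eta) = 1 - 2\eta\omega\lambda_2 + \eta^2\theta^2\lambda_n^2$.

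From here everything is mechanical. The quadratic $q(\eta)$ is strictly below $1$ exactly for $0 < \eta < 2\omega\lambda_2/(\theta^2\lambda_n^2)$, which establishes the step-size interval; minimizing it over $\eta$ gives $\eta^\star = \omega\lambda_2/(\theta^2\lambda_n^2)$ with $q(\eta^\star) = 1 - \omega^2\lambda_2^2/(\theta^2\lambda_n^2)$, which is precisely the stated per-step contraction factor. Iterating $\|e^K\| \le q(\eta^\star)^{K/2}\|e^0\|$ and solving $q(\eta^\star)^{K/2}\|e^0\| \le \Delta$ for $K$ (noting $\log\sqrt{q(\eta^\star)} < 0$) then produces the claimed iteration count.

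I expect the cross-term lower bound to be the main obstacle. Because $L$ and the diagonal Hessian $H$ do not commute, one cannot simply multiply the eigenvalue bounds $\lambda_{\min}(H)=\omega$ and $\lambda_2(L)$: writing $e^\top L H e = \sum_i h_i\, e_i (Le)_i$ shows the form can absorb negative contributions from coordinates where $e_i(Le)_i < 0$ and $h_i$ is large. Making the step rigorous therefore requires genuinely handling $L$ restricted to $\ones_n^\perp$ together with the per-coordinate Hessian bounds — the same ``careful treatment of the eigenspace of $L$'' invoked in the proof of Proposition~\ref{prop:grad-cvg} — rather than a black-box product of spectral quantities.
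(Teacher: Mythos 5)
Your proposal retraces the paper's own route: the same Lyapunov function $V(p)=\|p-p^\star\|^2$, the same error recursion obtained by subtracting the fixed-point identity from~\eqref{eq:disc-lap-flow}, and the same quadratic $q(\eta)=1-2\eta\omega\lambda_2+\eta^2\theta^2\lambda_n^2$, whose minimization yields the stated step size, contraction factor, and iteration count. Your setup is in fact more explicit than the paper's two-sentence proof (the KKT identity $L\nabla_p f(x,p^\star)=\zeros_n$, invariance of $\ones_n^\top e=0$, the integral secant matrix $H$), which the paper compresses into ``applying bounds via eigenvalues of $L$'' and strong convexity. But the one step you leave open --- the cross-term bound $e^\top LHe\ge\omega\lambda_2\|e\|^2$ --- is exactly where the entire argument lives, so as written your proof is incomplete at its crux.

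Moreover, your suspicion about that step is correct, and stronger than you state: under Assumptions~\ref{assump:fun-inner}, \ref{assump:init-cond-p2}, and~\ref{assump:graph-conn} alone the bound is false, so no amount of ``careful treatment of the eigenspace of $L$'' will close it. Take the path graph on three nodes ($\lambda_2=1$, $\lambda_3=3$), $H=\diag{1,1,\theta}$ with $\omega=1$ --- realizable exactly by quadratics $f_i(x,p_i)=\tfrac{1}{2}h_ip_i^2+c_ip_i$, for which the secant matrix is constant --- and the feasible error $e=(1,-\tfrac{3}{4},-\tfrac{1}{4})\perp\ones_3$ obtained by initializing $p^0=p^\star+e$. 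A direct computation gives $e^\top LHe=(55-2\theta)/16$, while $\omega\lambda_2\|e\|^2=26/16$: the needed inequality fails for $\theta>14.5$, and for $\theta>27.5$ the cross term is negative, so $V(p^+)>V(p)$ for \emph{every} $\eta>0$, violating at that iterate the per-step factor $\sqrt{1-\omega^2\lambda_2^2/(\lambda_n^2\theta^2)}$ claimed in the proposition. (If $H$ is a scalar matrix --- homogeneous curvatures --- your bound does hold, since then $e^\top LHe=h\,e^\top Le\ge\omega\lambda_2\|e\|^2$.) A repair within the stated assumptions is to descend in function value as in Proposition~\ref{prop:grad-cvg} rather than in $\|e\|$: writing $\nabla_p f(x,p)=\mu\ones_n+He$ and $P=I_n-\ones_n\ones_n^\top/n$, one has $\nabla_p f^\top L\nabla_p f=(He)^\top L(He)\ge\lambda_2\|PHe\|^2\ge\lambda_2\omega^2\|e\|^2$, because $\|PHe\|\,\|e\|\ge e^\top He\ge\omega\|e\|^2$; this yields a linear rate for $f(p^k)-f(p^\star)$ and hence exponential convergence of $\|p^k-p^\star\|$, but with a different contraction constant and an extra factor $\sqrt{\theta/\omega}$, not the exact rate in the statement. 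In short, your blind attempt is honest about the one step it cannot justify --- and that same step is asserted, not proved, in the paper's own proof.
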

\begin{proof}
  Consider $V(p) =
  \|p-p^\star\|^2$. Substituting~\eqref{eq:disc-lap-flow} and applying
  bounds via eigenvalues of $L$, using
  Assumption~\ref{assump:fun-inner}, and $\nu$-strong function
  convexity, we get $V(p^+)\le (\eta^2\lambda_n^2\theta^2 - 2\eta
  \lambda_2 \omega + 1)V(p)$, with $0 < \eta <
  2\lambda_2\omega/\lambda_n^2\theta^2$. The choice of $\eta =
  \omega\lambda_2 / \lambda_n^2 \theta^2$ implies the exponential
  convergence as in the statement. 
\end{proof}

We note that the results of
	Propositions~\ref{prop:grad-cvg} and~\ref{prop:exp-cvg} simply build on a
	Laplacian-projected version of vanilla gradient descent. However, it
	lays some basic groundwork and supplements our main results in the
	next subsection.

With this, we are ready to transition to the discussion on obtaining a DiSCRN update to $\P 1$.

\subsection{Outer-Loop Cubic-Newton Update}

We endow each agent with a local copy $x_i$ of the  variable
$x$, and we let $\x\in\real^{nd}$ be the stacked vector of these local
copies. 
Thus, a distributed reformulation of $\P 1$ is
\begin{equation*}
\begin{aligned}
\overline{\P 1}: \
\underset{\x\in\real^{nd}}{\text{min}} \quad
& \bar{F}(\x) = \Ex_{\chi\sim\D} \left[\bar{F}_{\chi} (\x)\right],  \\
\text{subject to} \quad & (L\otimes I_d)\x = \zeros_{nd},
\end{aligned}
\end{equation*}
where $\bar{F}_{\chi}:\real^{nd}\rightarrow\real$ is analagous to
$F_{\chi}:\real^{d}\rightarrow\real$ in the sense that each agent evaluates $f_i(x_i,p^\star_i)$ with its local copy of
$x_i$. 
Note that the constraint $(L\otimes I_d)\x = \zeros_{nd}$ imposes $x_i
= x_j, \forall i,j$ (Assumption~\ref{assump:graph-conn}), so
$\bar{F}_{\chi}$ and $F_{\chi}$ are equivalent in the agreement
subspace (and $\overline{\P 1}$ is equivalent to $\P 1$). 
Since our problem is nested and stochastic, there is a lack of access to
a closed form expression for $\bar{F}$ and 
$\bar{F}_{\chi}$. Thus, we introduce an
\emph{empirical-risk}, \emph{approximate} objective function.
To this end, let $F^S(\x) = 1/S \sum_{s=1}^S F^\Delta_{\chi^s}(\x)$ 
be the approximation of $\bar{F}$ for $S$ samples of
$\chi^s\sim\D$, where $F^\Delta_{\chi^s}\equiv \sum
f_i(x_i,\tilde{p}^s_i)$ and $\|\tilde{p}^s - p^\star\| \leq \Delta$ for
realization $\chi^s$. In this sense, $F^\Delta_{\chi^s}$ implicitly
depends on $\tilde{p}^s$, and the $\Delta$ superscript is a slight abuse
of notation. For now, the reader can consider $\Delta$ to be a sufficiently small design parameter describing the inexactness of the obtained solutions to $\P 2$; we build on this later. Ultimately, we intend to use batches of $F^S$ rather than the exact
$\overline{F}$ to implement DiSCRN. Consider then the cubic regularized submodel of $F^S$ at some $\x^k$:
	\begin{equation}\label{eq:cub-sub-primal}
	m_S^k (\x) = F^S(\x^k) +
	(\x-\x^k)^\top g^k + \frac{1}{2}
	(\x-\x^k)^\top H^k (\x-\x^k) + \sum_{i=1}^n \frac{\rho_i}{6}\| x_i - x_i^k\|^3,
	\end{equation}
where $g^k = \nabla F^S (\x^k), H^k = \nabla^2 F^S (\x^k)$. Note that there is a slight difference between~\eqref{eq:cub-sub-primal} and
the more standard cubic submodel~\eqref{eq:to-model} in that the
regularization terms are \emph{directly separable}; this is crucial
for a distributed implementation, and our forthcoming analysis justifies that convergence can still be established.
We are interested in finding $\x^+$ which minimizes~\eqref{eq:cub-sub-primal} in the agreement subspace:
	\begin{equation}\label{eq:cub-sub-constr}
	\P 3: \ \underset{\x\in\real^{nd}}{\min} \quad m_S^k(\x), \quad \text{subject to } (L \otimes I_d)\x = \zeros_{nd}.
	\end{equation}
	Therefore, we prescribe the Decentralized Gradient Descent dynamics from~\cite{JZ-WY:18}:
	\begin{equation}\label{eq:saddle-submodel}
	\x^{+,t+1}
	= 
	W\x^{+,t} -\alpha_t \nabla_{\x}m_{S}^k(\x^{+,t}),
	\end{equation}
	where $W = I_{nd} - (1/\lambda_n L\otimes I_d)$ and $\alpha_t \sim 1/t$. Per Proposition 3 and Theorem 2 of~\cite{JZ-WY:18}, $\x^{+,t}$ under the dynamics~\eqref{eq:cub-sub-primal} converges asymptotically to a stationary point of $\P 3$ with $O(1/k)$ convergence in the agreement subspace, i.e. $\| x_i-\overline{x} \|$ approaches zero at a rate $O(1/k)$, where $\overline{x} = \mean{(x_i)}$.

We remark that one could formulate the Lagrangian of $\P 3$ and use a saddle-point method to obtain a useful update $\x^+$. This is more parallel to the work of~\cite{YC-JD:19}, which achieves the global solution via gradient descent in the centralized setting. However, even the existence of a Lagrangian saddle-point is in question when the duality gap is nonzero, so further study is required on that approach.

The above discussion serves to set up the following condition on $\x^{k+1}$:
\begin{cond}\longthmtitle{Subsolver Output}\label{cond:subsolver}
	Let
	$\x^{k+1}$ be the output of a subsolver for $\P 3$. Then,
	\begin{enumerate}[(i)]
		\item $\x^{k+1}$ satisfies $(L\otimes I_d)\x^{k+1} = \zeros_{nd}$.\label{item:cond-i}
		\item For an arbitrarily small constant $c>0$ and some $\epsilon > 0$, $\x^{k+1}$ satisfies $m_S^k (\x^{k+1}) - m_S^k(\x^k) < -c\epsilon\|\x^{k+1} - \x^k \| - c\sqrt{\rho\epsilon} \|\x^{k+1} - \x^k \|^2$. \label{item:cond-ii}
	\end{enumerate}
\end{cond}
Part~(\ref{item:cond-i}) is implied in a linear convergence sense by the result of~\cite{JZ-WY:18} for the subsolver~\eqref{eq:saddle-submodel}. 
The~(\ref{item:cond-ii}) condition is straightforwardly implied by any subsolver that is guaranteed to strictly decrease $m_S^k$, e.g.~\eqref{eq:saddle-submodel}, because $c$ can be taken arbitrarily small. However, it can be seen in the statement of Theorem~\ref{thm:discrn} that small $c$ implies a direct tradeoff with $\Delta$ (becomes small) and/or $S$ (becomes large).

We now give a brief outline of the entire algorithm.
{\begin{center}\underline{DiSCRN Algorithm}\end{center}}
\begin{enumerate}
	\item Initialize $\x^0$ s.t. $(L\otimes I_d)\x^0 = \zeros_{nd}$
	\item Realize $\chi^s$ and initialize $p^0$ per Assumption~\ref{assump:init-cond-p2}\label{item:realize-init}
	\item Implement~\eqref{eq:disc-lap-flow} until $\vert p_i^+ -
	p_i\vert \leq \Delta \eta \lambda_2 \omega /\sqrt{n},
	\forall i$\label{item:stop-crit}
	\item Repeat from step~\ref{item:realize-init} $S$ times, storing $\tilde{p}^s\gets p^+$ at each $s$
	\item Compute locally required elements of $g^k,H^k$
	\item Compute an $\x^{k+1}$ satisfying
	Condition~\ref{cond:subsolver},
	e.g. via~\eqref{eq:saddle-submodel}; repeat from
	step~\ref{item:realize-init}\label{item:outer-loop}
\end{enumerate}
The DiSCRN Algorithm describes a fully distributed algorithm, as each
step can be performed with only local 
information. Ostensibly, $\x^0$ could be initialized arbitrarily, but
the first outer-loop would be a ``garbage" update until agreement is
obtained in step~\ref{item:outer-loop}. Note that Step~\ref{item:stop-crit} relates to a distributed stopping criterion for the subsolver of $\P 2$; this condition produces a solution $p^+$ in finite iterations which is sufficiently close to $p^\star$ for the sake of our analysis. This is detailed more in Theorem~\ref{thm:discrn} and its proof.

\begin{cond}\longthmtitle{Assumptions and Conditions for Theorem~\ref{thm:discrn}}\label{cond:thm1}
	Let $F$ satisfy Assumption~\ref{assump:fun-outer-lips}, on Lipschitz
	gradients and Hessians, and Assumption~\ref{assump:fun-outer-var},
	on variance conditions, and let $f$ satisfy
	Assumption~\ref{assump:inter-lipsch}, on Lipschitz interconnection
	of $x$ and $p$, and Assumption~\ref{assump:fun-inner}, on the Lipschitz condition of the
		function gradients with respect to $p$. Further, let Assumption~\ref{assump:init-cond-p2}, on the feasibility of the
		initial condition for $\P 2$, and Assumption~\ref{assump:graph-conn} on
		connectivity of the communication graph, each hold. Let $\x^{k+1}$ be the output of a subsolver for $\P 3$ that
		satisfies Condition~\ref{cond:subsolver} with $c$, and let $\tilde{p}^s\gets p^+$, where $p^+$ is the returned value under the dynamics~\eqref{eq:disc-lap-flow} satisfying $\vert p_i^+ -
		p_i\vert \leq \Delta \eta \lambda_2 \omega /\sqrt{n},
		\forall i$.
\end{cond}

\begin{thm}\longthmtitle{Convergence of DiSCRN}\label{thm:discrn}
	Let the circumstances of Condition~\ref{cond:thm1} apply here. For $S \geq
	\max\{\frac{M_1}{\bar{c}\epsilon},\frac{\sigma_1^2}{\bar{c}^2\epsilon^2},\frac{M_2}{\bar{c}\sqrt{\rho\epsilon}},\frac{\sigma_2^2}{\bar{c}^2\rho\epsilon}\}O(\log{((\epsilon^{1.5}\zeta\bar{c})^{-1})})$
	with $\bar{c}\epsilon + \psi_g \Delta \leq c \epsilon$ and
	$\bar{c}\sqrt{\rho\epsilon} + \psi_H \Delta \leq
	c\sqrt{\rho\epsilon}$, then, under the DiSCRN algorithm dynamics and for all $\zeta > 0$:
	\begin{enumerate}
		\item $\bar{F}(\x^{k+1}) < \bar{F}(\x^k) $ with
		probability $\geq 1 - \zeta$. \label{item:thm-1}
		\item There is a unique accumulation point $\bar{F}^\star$ of the sequence $\{\bar{F}(\x^k)\}_k$ with probability $1-\zeta$ and, for sufficiently large $k$, the value of $\bar{F}(\x^k)$ is bounded in probability: $\Px(\vert\bar{F}(\x^k) - \bar{F}^\star \vert \geq \kappa) \leq \zeta$ for any $\kappa > 0$.\label{item:thm-2}
		\item If $\bar{F}$ is radially unbounded, then the sequence $\{\x^k\}_k$ converges to a point $\x^\star$ such that $\bar{F}(\x^\star) = \bar{F}^\star$ with probability $1- \zeta$. \label{item:thm-3}
	\end{enumerate} 
\end{thm}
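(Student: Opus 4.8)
The plan is to show that each outer update strictly decreases the \emph{true} objective $\bar F$ with the stated confidence, by chaining three ingredients: a separable over-estimation property of the cubic submodel, the guaranteed submodel decrease from Condition~\ref{cond:subsolver}(ii), and high-probability control of the two errors separating $m_S^k$ from $\bar F$. First I would establish separable over-estimation: since each $f_i(\cdot,p_i^\star)$ has $\rho_i$-Lipschitz Hessian (Assumption~\ref{assump:fun-outer-lips}), the per-agent cubic Taylor remainder bound holds realization-wise, and summing over $i$ and taking $\Ex_{\chi\sim\D}$ gives
\[
\bar F(\x^{k+1}) - \bar F(\x^k) \le \langle \nabla\bar F(\x^k), d^k\rangle + \tfrac12 \langle d^k, \nabla^2\bar F(\x^k)\, d^k\rangle + \sum_{i=1}^n \tfrac{\rho_i}{6}\|x_i^{k+1}-x_i^k\|^3 ,
\]
with $d^k \triangleq \x^{k+1}-\x^k$; the \emph{separable} cubic term here matches the regularizer of~\eqref{eq:cub-sub-primal} exactly, which is precisely why the directly-separable regularization loses nothing. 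Subtracting the definition of $m_S^k(\x^{k+1})$ (recalling $m_S^k(\x^k)=F^S(\x^k)$, $g^k=\nabla F^S(\x^k)$, $H^k=\nabla^2 F^S(\x^k)$) cancels the cubic terms, and Condition~\ref{cond:subsolver}(ii) bounds the resulting submodel difference, yielding
\[
\bar F(\x^{k+1})-\bar F(\x^k) \le -c\epsilon\|d^k\| - c\sqrt{\rho\epsilon}\|d^k\|^2 + \langle \nabla\bar F(\x^k)-g^k, d^k\rangle + \tfrac12\langle d^k,(\nabla^2\bar F(\x^k)-H^k)d^k\rangle .
\]

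The crux is to bound the two error inner products. I would split each of $\|\nabla\bar F(\x^k)-g^k\|$ and $\|\nabla^2\bar F(\x^k)-H^k\|$ into a \emph{sampling} part and an \emph{inexactness} part by inserting the empirical object built from the \emph{exact} inner solutions $p^\star(\chi^s)$. For the inexactness part, a short lemma --- using the contraction of Proposition~\ref{prop:exp-cvg} to convert the per-coordinate increment test $|p_i^+-p_i|\le\Delta\eta\lambda_2\omega/\sqrt{n}$ into $\|\tilde p^s-p^\star\|\le\Delta$ --- together with Assumption~\ref{assump:inter-lipsch} bounds the contribution by $\psi_g\Delta$ and $\psi_H\Delta$, respectively. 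The sampling part is unbiased ($\bar F=\Ex[\bar F_\chi]$), so by a vector/matrix concentration inequality driven by the variances $\sigma_1^2,\sigma_2^2$ and almost-sure bounds $M_1,M_2$ of Assumption~\ref{assump:fun-outer-var}, it is at most $\bar c\epsilon$ and $\bar c\sqrt{\rho\epsilon}$ with probability at least $1-\zeta$ exactly when $S$ is as large as stated --- the $\log((\epsilon^{1.5}\zeta\bar c)^{-1})$ factor being the usual cost of boosting the confidence to $1-\zeta$. Substituting these bounds, applying Cauchy--Schwarz to the inner products, and invoking the hypotheses $\bar c\epsilon+\psi_g\Delta\le c\epsilon$ and $\bar c\sqrt{\rho\epsilon}+\psi_H\Delta\le c\sqrt{\rho\epsilon}$, the linear terms cancel and I obtain, with probability at least $1-\zeta$,
\[
\bar F(\x^{k+1})-\bar F(\x^k) \le -\tfrac12 c\sqrt{\rho\epsilon}\,\|d^k\|^2 \le 0 ,
\]
which is strict because Condition~\ref{cond:subsolver}(ii) cannot hold for $d^k=\zeros_{nd}$; this proves claim~(1).

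For claim~(2), the per-step decrease makes $\{\bar F(\x^k)\}$ monotonically nonincreasing; since $\bar F$ is bounded below (as a standing property of the problem class, and automatically under the radial-unboundedness hypothesis of~(3)), the sequence converges to a single finite limit $\bar F^\star$, which is then its unique accumulation point, and the convergence yields the ``bounded in probability'' tail $\Px(|\bar F(\x^k)-\bar F^\star|\ge\kappa)\le\zeta$. For claim~(3), radial unboundedness makes the sublevel set $\{\x:\bar F(\x)\le\bar F(\x^0)\}$ compact, so the iterates stay bounded; telescoping the displayed inequality and using the lower bound on $\bar F$ gives $\sum_k\|d^k\|^2<\infty$, hence $\|d^k\|\to0$. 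A LaSalle-type argument on the compact sublevel set then forces $\{\x^k\}$ to converge to a single point $\x^\star$ in the level set $\{\bar F=\bar F^\star\}$, i.e. $\bar F(\x^\star)=\bar F^\star$.

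The main obstacle is the error-control step: choosing concentration inequalities for the gradient (vector) and Hessian (matrix) empirical errors and matching their sample requirements to the stated $S$ with the correct $\epsilon$-, $\rho$-, and $\zeta$-dependence, while cleanly decoupling the sampling error from the inner-inexactness error and discharging the latter through the Proposition~\ref{prop:exp-cvg} contraction and the explicit stopping threshold $|p_i^+-p_i|\le\Delta\eta\lambda_2\omega/\sqrt{n}$. A secondary subtlety is upgrading the per-iteration high-probability descent to the trajectory-level statements of~(2)--(3); I would either fix $\zeta$ as the per-step confidence and use the almost-sure deviation bounds $M_1,M_2$ to keep rare bad steps from accumulating, or recast $\{\bar F(\x^k)\}$ as a supermartingale and invoke a supermartingale convergence theorem.
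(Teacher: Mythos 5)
Your proposal is correct and follows essentially the same route as the paper's proof: the same separable cubic over-estimation of $\bar{F}$ carried through the expectation, the same insertion of the exact-solution empirical objects ($g_\star^k,H_\star^k$) to split the gradient/Hessian errors into a sampling part (the paper discharges the concentration step by citing Lemma~4 of~\cite{NT-MS-CJ-JR-MJ:18} at the stated $S$, where you propose proving it via vector/matrix concentration) plus an inexactness part bounded by $\psi_g\Delta,\psi_H\Delta$ through Assumption~\ref{assump:inter-lipsch}, then Condition~\ref{cond:subsolver}(\ref{item:cond-ii}) with the hypotheses $\bar{c}\epsilon+\psi_g\Delta\le c\epsilon$, $\bar{c}\sqrt{\rho\epsilon}+\psi_H\Delta\le c\sqrt{\rho\epsilon}$ to force descent, and the same monotonicity and compact-sublevel-set arguments for claims~(2)--(3), where you are in fact somewhat more careful than the paper about $\|d^k\|\to 0$ and about upgrading per-step confidence to trajectory-level statements. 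The one local deviation is your conversion of the stopping test into $\|\tilde{p}^s-p^\star\|\le\Delta$: invoking the contraction of Proposition~\ref{prop:exp-cvg} with factor $q=\sqrt{1-\omega^2\lambda_2^2/(\lambda_n^2\theta^2)}$ and the triangle inequality yields only $\|p-p^\star\|\le\|p^+-p\|/(1-q)\le(1+q)\Delta\le 2\Delta$ (since $\eta\lambda_2\omega=1-q^2$ at the prescribed step size), whereas the paper argues directly from $\omega$-strong convexity and the Laplacian spectral bound, $\lambda_2\omega\|p-p^\star\|\le\|L\nabla_p f(x,p)\|=\|p^+-p\|/\eta$, which is exactly what the threshold $\Delta\eta\lambda_2\omega/\sqrt{n}$ is calibrated to --- your version loses only a benign constant, absorbable by rescaling $\Delta$, but as written it does not close under the stated hypotheses without that adjustment.
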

\begin{proof}
	First, we aim to obtain the bound $\| \tilde{p}^s - p^\star\| \leq \Delta$ for each instance $s$ of $\P 2$. The Lipschitz condition of Assumption~\ref{assump:fun-inner} implies
		\begin{equation*}
		{\small
			\begin{aligned}
			\omega \| p - p^\star\| &\leq \| \nabla_p f(x,p) - \nabla_p f(x,p^\star)\|, \\
			\lambda_2 \omega \| p - p^\star\| &\leq \| L(\nabla_p f(x,p) - \nabla_p f(x,p^\star))\| \\
			&= \| L \nabla_p f(x,p) \| = 1/\eta \|p^+ - p\| \leq \Delta \lambda_2 \omega.
			\end{aligned}
		}
		\end{equation*}
		Finally,  $1/\sqrt{n}$  comes from breaking $p^+ - p$
		into components and since, for $v\in\real^n$, if 
		$\vert v_i \vert \leq c/\sqrt{n}$ implies $ \| v \| \leq
		c$.
	
	Turning to $\P 1$, let $g_\star^k = \frac{1}{S} \sum_{s=1}^S \sum_i \nabla_{x_i}
		f_i(x_i^k, p_i^\star)$ and \\ $H_\star^k = \frac{1}{S}
		\sum_{s=1}^S \sum_i \nabla^2_{x_i x_i} f_i(x_i^k, p_i^\star)$.
		Lemma 4 of~\cite{NT-MS-CJ-JR-MJ:18} justifies that for
		arbitrary $\bar{c} > 0$, choosing $S \geq
		\max\{\frac{M_1}{\bar{c}\epsilon},\frac{\sigma_1^2}{\bar{c}^2\epsilon^2},\frac{M_2}{\bar{c}\sqrt{\rho\epsilon}},\frac{\sigma_2^2}{\bar{c}^2\rho\epsilon}\}O(\log{((\epsilon^{1.5}\zeta\bar{c})^{-1})})$
		implies that $\| g_\star^k - \nabla \bar{F}(\x^k) \| \leq
		\bar{c}\epsilon$ and $\|(H_\star^k - \nabla_{\x\x}^2
		\bar{F}(\x^k))v\| \leq
		\bar{c}\epsilon\sqrt{\rho\epsilon}\|v\|, \ \forall v$ with
		probability $1-\zeta$.
	
	Let $\phi^k_g = g^k - g_\star^k, \phi^k_H = H^k - H_\star^k$, where $g^k$ and $H^k$ use the inexact estimates $\tilde{p}^s$ satisfying $\| \tilde{p}^s - p^\star\| \leq \Delta$. Substitutions and applying Assumption~\ref{assump:inter-lipsch} gives:
		\begin{equation*}
		\begin{aligned}
		&\| g^k - \nabla_{\x} \bar{F}(\x^k) \| \leq \| g_\star^k - \nabla_{\x} \bar{F}(\x^k) \| + \| \phi^k_g\| \\
		& \qquad \leq \bar{c}\epsilon + \psi_g \Delta, \\
		&\| (H^k - \nabla^2_{\x\x} \bar{F}(\x^k))v \| \leq \| (H_\star^k - \nabla^2_{\x\x} \bar{F}(\x^k))v \| + \| \phi^k_H\| \\
		& \qquad \leq \bar{c}\sqrt{\rho\epsilon} + \psi_H \Delta, \ \forall v.
		\end{aligned}
		\end{equation*}
		Next, let $\xi^k := \x^{k+1} - \x^k$ for notational convenience. The separable cubic regularized terms of $m_S^k$ can be used to bound the true function value:
		\begin{equation*}
		\begin{aligned}
			&\bar{F}(\x^{k+1}) \leq \bar{F}(\x^{k}) + \nabla \bar{F}(\x^{k})^\top \xi^k + \xi^{k\top} \nabla^2 \bar{F} (\x^k) \xi^k \\
			& \qquad \qquad + \sum_i \rho_i/6 \| x_i^{k+1} - x_i^k \|^3 \Rightarrow \\
			&\bar{F}(\x^{k+1}) - \bar{F}(\x^{k}) \leq m_S^k(\x^{k+1}) - m_S^k(\x^k) \\
			&\quad + (\nabla \bar{F} (x^k) - g^k )^\top \xi^k + 1/2 \xi^{k\top} (\nabla^2 \bar{F}(x^k) - H^k ) \xi^k \\
			& \ \leq m_S^k(\x^{k+1}) - m_S^k(\x^k) \\ & \quad + (\bar{c} \epsilon  + \psi_g \Delta) \| \xi^k \| + (\bar{c} \sqrt{\rho\epsilon} + \psi_H \Delta )\| \xi^k\|^2 \\
			& \ \leq m_S^k(\x^{k+1}) - m_S^k(\x^k) + c\epsilon\|\xi^k\| + c\sqrt{\rho\epsilon}\|\xi^k \|^2 < 0,
		\end{aligned}
		\end{equation*}
		where the first inequality is implied by breaking up
                $\bar{F}_\chi (\x)$ in to its separable local
                functions, applying
                Assumption~\ref{assump:fun-outer-lips}, and noting that
                the inequality carries through the expectation
                operator. Subsequent inequalities are directly
                obtained via substitutions. The lefthand inequality of
                the final line stems from the Theorem statement, and
                the righthand inequality of the final line
                from~(\ref{item:cond-ii}) of Condition~1.
		
		As for statement~\ref{item:thm-2}, the existence of
                the unique accumulation point $\bar{F}^\star$ is a
                consequence of the monotonicity of a sequence of real
                numbers. Bounding the value of this point follows
                directly from the definition of an accumulation point.
		
		Regarding statement~\ref{item:thm-3}, note that, if
                the sequence $\{\x^k\}_k$ is bounded, then the set of
                its accumulation points have a finite norm. This
                follows from radial unboundedness of $\bar{F}$ due to
                $\bar{F}(\x^k)\in \{\bar{F}(\x) \,|\,\bar{F}(\x) \leq
                \bar{F}(\x^0)\}$ being compact (where $\bar{F}(\x^0)$
                is some fixed constant). Then, consider any
                accumulation point $\x_a^\star$ of the sequence
                $\{\x^k\}_k$. Because of the monotonicity of
                $\{\bar{F(\x^k)}\}_k$, it follows directly that
                $\bar{F}(\x_a^\star) = \bar{F}^\star$.
		
\end{proof}


\section{Simulations}

\torhighlight{Our simulation study considers two cases: (i) a synthetic nonconvex case, which demonstrates the superiority of DiSCRN over analagous gradient-based and Newton-based approaches, and (ii) a quadratic convex case, which extends the setting to an electric vehicle charging scenario over a time-horizon in which the price of electricity follows a time-of-use model.}

\subsection{Nonconvex Scenario}

The cost functions $f_i$ for this scenario are represented as:
	\begin{equation*}\label{eq:fns-p}
	\begin{aligned}
	f_i(x,p_i) &= \frac{1}{2}\alpha_i(x) p_i^2 + \beta_i(x) p_i + \gamma_i.
	\end{aligned}
	\end{equation*}
	Each $\alpha_i:\real\rightarrow\real$ is quartic in $x$ and generated according to~\eqref{eq:alpha}, where each $a_i^2$ is determined such that $\min_x \ \alpha_i(x) = \omega_i > 0$ with $\omega_i\in\U[1,5]$ per Assumption~\ref{assump:fun-inner}. The $\beta_i:\real\rightarrow\real$ are (possibly nonconvex) quadratic~\eqref{eq:beta}, and $\gamma_i = 0$.
	\begin{equation}\label{eq:alpha}
	\begin{aligned}
	&\alpha_i(x) = a_i^1(x-z_i^1)(x-z_i^2)(x-z_i^3)(x-z_i^4) + a_i^2, \\
	&a_i^1\in\U[0.5,1.5], z_i^1\in\U[-2,-1], z_i^2\in\U[-1,0], \\ &z_i^3\in\U[0,1], z_i^4\in\U[1,2],
	\end{aligned}
	\end{equation}
	\begin{equation}\label{eq:beta}
	\begin{aligned}
	&\beta_i(x) = b_i^1(x-z_i^5)(x-z_i^6), \\
	&b_i^1\in\U[-1,1], z_i^5\in\U[-2,0],z_i^6\in\U[0,2],
	\end{aligned}
	\end{equation}

We compare our DiSCRN method with
gradient-based and Newton-based updates of the same batch sizes, where
the gradient-like and Newton-like updates are computed via:
	\begin{equation*}
	\begin{aligned}
		m_{g}^k(\x) &= F^S (\x^k) +
		(\x-\x^k)^\top g^k + \sum_i \frac{\eta_g}{2} \|x_i - x_i^k\|^2, \\
		m_{H}^k(\x) = F^S (\x^k) &+ (\x-\x^k)^\top g^k
		+\frac{1}{2} (\x-\x^k)^\top H^k (\x-\x^k)
		+ \sum_i \frac{\eta_H}{2} \|x_i - x_i^k\|^2.
	\end{aligned}
	\end{equation*}
We obtain $\x^{k+1}$ empirically for all three methods by
implementing~\eqref{eq:saddle-submodel} until the updates become very
small. We found that both $\eta_g$ and $\eta_H$ must be sufficiently large
to ensure stability, and
$\nabla^2_x F(x) \succ -\eta_H I_d$ to ensure $m_H^k (x)$ bounded. We take $\Delta = 0.1, S = 20, n = 40, \vert
\E \vert = 120, \Pref = 40, \D_i = \U[0,1.5] \ \forall i, \rho = 50, 
\eta_g = 100, \eta_H = 50$.

We note substantially improved performance of DiSCRN over the more traditional gradient-based and Newton-based approaches. In particular, the trajectory finds a minimizer in roughly half and one-third the number of outer-loop iterations required by Newton and gradient, respectively. It is clear that, for $x^{k+1} \approx x^k$, the cubic regularization is less dominant than the squared regularizations, allowing the DiSCRN trajectory to be influenced more by the problem data $g^k, H^k$. As for the parameters $(\rho, \eta_g, \eta_H)$, $\eta_H = 50$ and $\eta_g = 100$ were roughly the lowest possible values without inducing instability. By contrast, reducing $\rho$ to values $\sim 10^{-1}$ was still stable for DiSCRN. We noticed a clear tradeoff between
	$S$ and $\Delta$, with small $S\sim 10^0$ requiring $\Delta\sim
	10^{-1}$ to converge and large $S\sim 10^3$ converging even for large
	$\Delta\sim 10^2$, which is implied by Theorem~\ref{thm:discrn}. Finally, DiSCRN achieves reduced disagreement compared to gradient and Newton; this could be in part due to~\eqref{eq:saddle-submodel} finding a stationary point of $\P 3$ faster, allotting more iterations where the consensus terms dominate the update.

\begin{figure}
	\centering 
	\includegraphics[scale=0.45]{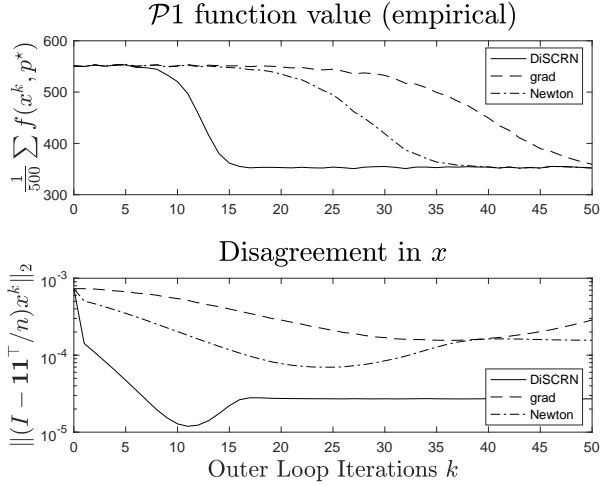}
	\caption{Comparison of CRN method with gradient-based and
		Newton-based approaches. \textbf{Top:} empirical
		approximation of $F(x^k)$, obtained by averaging
		$f(x^k,p^\star)$ over 500 realizations of $\P 2$ at each
		$k$. 
		\textbf{Bottom:} agents' disagreement on the
		value of $x$, quantified by $\| (I-\mathbf{1}\mathbf{1}^\top
		/ n) x^k \|_2$.}\label{fig:sims}
\end{figure}

\subsection{\torhighlight{Electric Vehicle Charging Scenario}}

{\color{black} Consider a system of $n = 25$ electric vehicle users attempting to satisfy a global load constraint while simultaneously minimizing the cost of (i) the actual economic cost of charging (or discharging) at each time $l\in\until{60}$ according to a time-of-use pricing model $P_l$ and (ii) a user-specified preferred charging rate, as in Example~\ref{example:model}. The time-of-use pricing model is characterized by:
\begin{equation*}
P_l = \begin{cases}
2, & l\in\until{20}, \\
4, & l\in\{21,\dots,40\}, \\
1, & l\in\{41,\dots,60\}.
\end{cases}
\end{equation*}
The value $\subscr{P}{ref} = 40$ 
is fixed for all $l$, as
are the distributions $\D_i = \U\left[0.5,1.5\right]$ for all $i$. The
realizations $\chi_{i,l}$ at each time step $l$ represent some net
generation/consumption quantity from each user, e.g. stochastic PV
generation and residential load use.

The following quadratic cost model applies to each EV user at time $l$:
\begin{equation*}
f^l_i(x,p_i) = a_i P_l p_i^2 + b_i P_l p_i + c_i(p_i - \underline{p}_i - d_i x)^2.
\end{equation*}
The total cost to be minimized for $\P 1$ is summed over the entire horizon: $F_\chi (x) \equiv 1/60 \sum_{l=1}^{60} \sum_{i}f^l_i (x,p_i^\star)$.

The first two terms of $f_i^l$ are associated with the real economic
cost of charging/discharging at time $l$, where $a_i,b_i > 0$ are
physical constants associated with the battery model of each user $i$
(e.g. the internal resistance, charge capacity, and open circuit
voltage, see e.g.~\cite{SB-HF:13}). The last term incentivizes charging
close to the user's preferred rate, $\underline{p}_i$, with ``tuning"
coefficients $c_i>0$, which weighs this cost against the economic
cost, and $d_i>0$, which allows for a shift in the preferred charging
rate via an external incentive $x$. 

Users may not only have batteries with different physical constants
$a_i,b_i$, but also different preferences on $\underline{p}_i, c_i,$
and $d_i$. For this study, we simply generate
$a_i,b_i,c_i,d_i\in\U\left[1,3\right]$ and
$\underline{p}_i\in\U\left[0,2\right]$. We choose the simulation
constants $\Delta=0.1, S=20, \vert \E \vert=75, \rho=0.1,
\eta_g=500,\eta_H=1000$. 

The results are plotted in Figure~\ref{fig:sims2}. We again note
superior performance by the DiSCRN algorithm over the gradient-based
and Newton-based methods, where the convergence is much more clearly
linear in this convex case. However, a stronger takeaway of this study
to note that the model was applicable for this extended time-horizon
scenario. A subject of future work is to incorporate battery charging
dynamics and constraints in this scenario which can be gradually
learned by the algorithm, akin to a reinforcement learning setting.

\begin{figure}
	\centering 
	\includegraphics[scale=0.5]{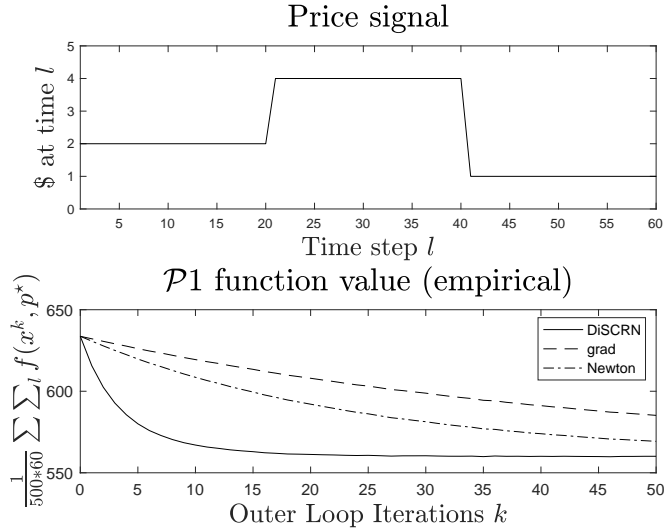}
	\caption{\torhighlight{Comparison of CRN method with gradient-based and
		Newton-based approaches for time-of-use pricing EV charging (convex) scenario. \textbf{Top:} price signal versus time $l$. 
		\textbf{Bottom:} empirical
		approximation of $F(x^k)$, obtained by averaging $1/60 \sum_l f(x^k,p^\star)$ over 500 realizations of $\P 2$ at each $k$.}}\label{fig:sims2}
\end{figure}

}

\section{Conclusion}
Here, we studied a nested, distributed stochastic optimization problem
and applied a Distributed Stochastic Cubic-Regularized Newton (DiSCRN)
algorithm to solve it. In order to compute the DiSCRN update, a batch
of approximate solutions to realizations of the inner-problem are
obtained, and we developed a locally-checkable stopping criterion to
certify sufficient accuracy of these solutions. The accuracy parameter
is directly leveraged in the analysis of the outer-problem, and
simulations justify both faster and more robust convergence properties
than that of comparable gradient-like and Newton-like
approaches. Future work involves developing and analyzing a
saddle-point dynamics approach for solving $\P 3$ (extending the work
of~\cite{YC-JD:19}), extending the analysis to accommodate small
disagreements in the agent states $x_i$, and exploring adaptive batch
size techniques.

\bibliographystyle{abbrv}
\bibliography{alias,SMD-add,JC,SM}

\end{document}